\journal{Theoretical Computer Science}
\newtheorem{theorem}{Theorem}
\newtheorem{problem}{Problem}
\newtheorem{remark}[theorem]{Remark}
\theoremstyle{definition}
\newtheorem{example}{Example}
\newcommand{\uno}{\varphi}
\newcommand{\Uuno}{\Phi}
\newcommand{\Thue}{\mathbf{t}}
\begin{document}

\begin{frontmatter}

\title{Disposability in Square-Free Words}

\author{Tero Harju}
\address{Department of Mathematics and Statistics,
         University of~Turku, Finland}
\ead{harju@utu.fi}

\begin{keyword}
Square-free ternary words \sep irreducibly square-free \sep Thue word
\MSC{68R15}
\end{keyword}

\begin{abstract}
We consider words $w$ over the alphabet $\Sigma=\{0,1,2\}$. It is shown that there are
irreducibly square-free words of all lengths $n$ except 4,5,7 and 12. Such a word is
square-free (i.e., it has no repetitions $uu$ as factors), but by removing any one internal
letter creates a square in the word.
\end{abstract}

\end{frontmatter}

\section{Introduction}
Grytczuk et al.~\cite{Grytczuk} showed that there are infinitely many `extremal' square-free
ternary words where one cannot augment a single new letter anywhere without creating a square;
see also Mol and Rampersad~\cite{MolRampersad2020} for further results.
In this article we consider the dual problem of this and show that there are square-free ternary words
of all lengths, except $4,5,7$ and $12$, where removing any single interior letter creates
a square. Although the problems resemble each other, the results and the proof techniques are
quite different.

Let $\Sigma=\{0,1,2\}$ be a fixed ternary alphabet and denote by $\Sigma^*$ and
$\Sigma^\omega$ the sets of all finite and infinite length words over $\Sigma$,
respectively. A finite word $u$ is called a \emph{factor} of a word $w\in \Sigma^* \cup \Sigma^\omega$
if $w=w_1uw_2$ for some, possibly empty, words $w_1$ and $w_2$.
Moreover, $w$ is \emph{square-free} if it does not have a
nonempty factor of the form $uu$.

Let $w \in \Sigma^*$ be a square-free word with a factorization $w=w_1aw_2$ where $a \in \Sigma$.
We say that the occurrence of the letter $a$ is \emph{disposable} if $w_1w_2$ is square-free.
The definition extends naturally to infinite words. An  occurrence of a letter $a$ is \emph{interior}, if
$w_1$ and $w_2$ are both nonempty.

If a square-free word $w\in \Sigma^* \cup \Sigma^\omega$ does not have disposable occurrences of
interior letters
then $w$ is said to be \emph{irreducibly square-free}, i.e., by deleting any
interior occurrence of a letter results in a square in the remaining word.

The nonemptiness condition on
the prefixes and suffixes is required since all prefixes and suffixes of square-free words
are disposable.

\begin{remark}
The words of length at most two have no internal letters, and
therefore we consider the property of being
irreducibly square-free only
for words of length at least three.
\end{remark}

\begin{example}
Let $\tau\colon \Sigma^* \to \Sigma^*$ be the morphism determined by
\[
\tau(0)=012, \quad \tau(1)=02, \quad \tau(2)=1\,.
\]
The \emph{Thue word} $\Thue$ is the fixed point $\Thue=\tau^\omega(0)$ of $\tau$ obtained
by iterating $\tau$ on the start word $0$. Then $\Thue$ is an infinite square-free word; see, e.g.,
Lothaire~\cite{Lothaire}:
\[
\Thue=012 02 1 012 1 02 012 02 1 02 012 1 012 02 1 012 \cdots\,
\]
We show that the Thue word is \emph{not} irreducibly
square-free. For this, we first notice that $\Thue$ avoids $010$ and $212$ as factors. Also, it avoids
$1021$, since this word would have to be a factor of $\tau(212)$. Deleting the letter
$2$ at the third position results in a square-free word $01021012102012\cdots$.
Indeed,  a potential square would
have to start either from the beginning, but the prefix $010$ does not occur in $\Thue$, or from
the second position, but $1021$ does not occur in $\Thue$. 
\end{example}

\medskip

Later checking of irreducibility of (infinite) words is based on the following procedure
that depends on a morphism $\alpha\colon \Sigma^* \to \Sigma^*$
for which $|\alpha(a)| > 1$ for all letters $a$.

\medskip

\noindent \textbf{Procedure~I.}
\begin{itemize}
\item[1]
Check that the morphism $\alpha$ generates an infinite
square-free word; say, $\alpha^\omega(0)$ or $\alpha(w)$, where $w$ is a given
infinite square-free word.

\item[2]
For any pair $(a,b)$ of different letters, check that $\alpha(ab)$ is irreducibly square-free.
This takes care that the last letter of $\alpha(a)$ and the first letter of $\alpha(b)$ are
not disposable in $\alpha(ab)$. This guarantees that these occurrences are not disposable
in any $\alpha(w)$ where $w=w_1abw_2$ is square-free.
\end{itemize}

The first item of Procedure~I is often taken care of by Crochemore's criterion ~\cite{Crochemore}:

\begin{theorem}\label{Crochemore}
A morphism $\alpha\colon \Sigma^* \to \Sigma^*$ preserves  square-free words if and
only if it preserves square-freeness of words of length five.
\end{theorem}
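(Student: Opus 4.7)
The ``only if'' direction is immediate: if $\alpha$ preserves square-freeness of every square-free word, then it does so in particular on square-free words of length five. For the converse, I will argue by minimal counterexample. Assume $\alpha$ preserves square-freeness on all square-free words of length five but that $\alpha(w)$ is not square-free for some square-free $w$. Choose such a $w = a_1 a_2 \cdots a_n$ of minimum length; the goal is to deduce $n \le 5$, which will contradict the hypothesis.

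Fix a square $xx$ occurring as a factor of $\alpha(w)$, and write $\alpha(w) = u\,xx\,v$. By the minimality of $n$, the words $\alpha(a_2\cdots a_n)$ and $\alpha(a_1\cdots a_{n-1})$ are both square-free. Hence the square $xx$ cannot lie entirely inside $\alpha(a_2\cdots a_n)$ nor inside $\alpha(a_1\cdots a_{n-1})$: the prefix $u$ must be a \emph{proper} prefix of $\alpha(a_1)$ and the suffix $v$ must be a \emph{proper} suffix of $\alpha(a_n)$. Thus the square spans, in a tight way, from strictly inside $\alpha(a_1)$ to strictly inside $\alpha(a_n)$.

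The next step is to locate the midpoint where the two copies of $x$ meet, relative to the factorisation $\alpha(a_1)\alpha(a_2)\cdots \alpha(a_n)$. There are two cases: either the midpoint coincides with a block boundary between some $\alpha(a_k)$ and $\alpha(a_{k+1})$, or it lies strictly inside some block $\alpha(a_k)$. In the first case, matching the two halves of $xx$ letter-by-letter gives $\alpha(a_1\cdots a_k)$ and $\alpha(a_{k+1}\cdots a_n)$ equal up to a proper prefix/suffix adjustment, which, combined with $|\alpha(a)|>1$ and the square-freeness of $w$, forces short $n$. In the second case one obtains a ``shifted'' equation in which a suffix of some $\alpha(a_i)$ equals a prefix of some $\alpha(a_j)$ and internal blocks line up in a staggered fashion; again the square-freeness of $a_1\cdots a_n$ prevents long matches and bounds $n$.

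I expect the second case — where the midpoint splits an image block — to be the main obstacle, as it produces the most delicate overlap equations and is what dictates why the universal bound is $5$ rather than a smaller constant; one must rule out each possible offset of the stagger simultaneously. Once $n\le 5$ is established in every case, the hypothesis on length-five words gives that $\alpha(w)$ is square-free, contradicting the existence of $xx$ and completing the argument.
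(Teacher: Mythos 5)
This statement is Crochemore's theorem, which the paper does not prove at all — it is quoted from the literature with a citation — so there is no in-paper argument to compare yours against. Judged on its own, your proposal is a strategy outline rather than a proof, and the gap sits exactly where you yourself flag it. The minimal-counterexample setup and the observation that $u$ and $v$ must be proper prefix/suffix of $\alpha(a_1)$ and $\alpha(a_n)$ are fine, but everything after that is asserted, not argued. In particular, "matching the two halves of $xx$ letter-by-letter" does not by itself align anything with the block factorisation $\alpha(a_1)\cdots\alpha(a_n)$: the two occurrences of $x$ are equal as words, but a priori their internal block structures can be completely staggered. The standard proofs first establish a synchronisation (alignment) lemma — roughly, that under the hypothesis an occurrence of $\alpha(a)$ inside $\alpha(bc)$ must coincide with $\alpha(b)$ or $\alpha(c)$ — and this lemma must itself be \emph{derived} from the assumption that short square-free words have square-free images; it is not free. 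Note that the paper states the alignment property as a checked feature of its particular morphism $\varphi$, precisely because it does not hold for arbitrary morphisms. Your sketch never supplies this step, and without it neither of your two cases closes.

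Two further concrete problems. First, you invoke $|\alpha(a)|>1$, which is a hypothesis of the paper's Procedure~I but not of the theorem as stated; you would need to either justify it or handle short/erasing images separately. Second, the constant $5$ never actually appears in your argument: you say the overlap equations "bound $n$" but give no mechanism producing $5$ specifically (the bound depends on the alphabet being ternary; over larger alphabets the analogous bound involves the ratio of the longest to the shortest image). Since the entire content of Crochemore's theorem is the quantitative reduction to length $5$, deferring "the main obstacle" to a future case analysis means the proof has not been given.
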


\section{Irreducibly square-free words of almost all lengths}

By a systematic search we find that there  are no irreducibly square-free words of
lengths 4,5,7 and 12. In the following table we have counted the irreducibly
square-free words of lengths $3, \ldots, 30$ up to isomorphism (produced by permutations of
the letters) and reversal (mirror image) of the words. For instance, $010212010$ is
the only irreducibly square-free word of length nine up to isomorphism and reversal.
It is a palindrome.
The table suggests that the irreducibly square-free words are quite rare among the
square-free words, e.g., there are (up to isomorphism and reversal) 202 square-free words
of length 20, but only 12 of those are irreducibly square-free.
Counting the numbers of (irreducibly) square-free words must take into consideration
those words that are palindromes or isomorphic to their reversals.

 \begin{table}[htb]
 \begin{center}
\begin{tabular}{|rr|rr|rr|rr|rr|rr|rr|rr|}\hline
length & card &&&&&&&&&&&&\\ \hline
3& 1 &
4& 0 &
5& 0 &
6& 1 &
7& 0       &
8& 1       &
9& 1      \\
10& 1     &
11& 3     &
12& 0     &
13& 3     &
14& 4     &
15& 4     &
16& 7     \\
17& 9     &
18& 7     &
19& 12   &
20& 12   &
21& 16   &
22& 18   &
23& 23   \\
24& 24  &
25& 34  &
26& 36  &
27& 48  &
28& 55  &
29& 69  &
30& 78  \\
\hline
\end{tabular}
\end{center}
           \caption{The number of irreducibly square-free words of lengths from 3 to 30
           up to isomorphism and reversal.}
\end{table}

\begin{theorem}\label{thm:2}
There exists an infinite irreducibly square-free word.
\end{theorem}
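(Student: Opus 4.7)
The strategy is to apply Procedure~I. I would search for a morphism $\alpha\colon\Sigma^*\to\Sigma^*$ with $|\alpha(a)|>1$ for every letter, such that (i)~$\alpha$ preserves square-freeness, and (ii)~for each of the six ordered pairs $(a,b)$ of distinct letters in $\Sigma$, the word $\alpha(ab)$ is irreducibly square-free. Once such an $\alpha$ is in hand, taking any infinite ternary square-free word $w$---for instance the Thue word $\Thue$---and setting $W=\alpha(w)$ yields an infinite square-free word by~(i). Every interior letter of $W$ lies strictly inside some factor $\alpha(ab)$ with $a\neq b$ (consecutive letters of the square-free $w$ differ), so by~(ii) deleting that letter creates a square inside $\alpha(ab)$; this square is preserved in the larger word $W$, which is therefore irreducibly square-free.

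For step~(i) I would invoke Crochemore's criterion (Theorem~\ref{Crochemore}), reducing the verification to checking that $\alpha(u)$ is square-free for the finitely many ternary square-free words $u$ of length~$5$. For step~(ii) one writes out the six short words $\alpha(ab)$ and deletes each interior letter in turn, verifying by inspection that a square appears. Both tasks are finite and mechanical once $\alpha$ is fixed.

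The main obstacle is locating a suitable $\alpha$: the table shows that irreducibly square-free words are rather sparse among square-free words, so forcing all six concatenations $\alpha(ab)$ to be irreducibly square-free simultaneously is a strong constraint. I would guide the search by inspecting the short irreducibly square-free words catalogued in the table (for example the unique representatives of lengths $6, 8, 9, 10$) and try to express them as products $\alpha(a)\alpha(b)$ for a consistent choice of $\alpha(0), \alpha(1), \alpha(2)$. Imposing a symmetry---say, requiring $\alpha$ to be equivariant under the alphabet involution $0\leftrightarrow 2$---halves the number of pairs to verify and prunes the search space considerably. If no short uniform morphism suffices, additional flexibility comes from allowing non-uniform images or from composing $\alpha$ with the Thue morphism $\tau$ before applying it to $\Thue$.
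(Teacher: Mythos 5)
Your strategy coincides exactly with the paper's: apply Procedure~I to a morphism $\alpha$ that preserves square-freeness (verified via Crochemore's criterion, Theorem~\ref{Crochemore}) and whose two-letter images $\alpha(ab)$, $a\neq b$, are all irreducibly square-free. Your reduction is sound: since consecutive letters of a square-free $w$ differ and $|\alpha(a)|>1$ for every letter, each interior letter of $\alpha(w)$ is indeed an interior letter of some block $\alpha(ab)$ with $a\neq b$, and the square produced by deleting it inside that block persists as a factor of the deleted word. The one thing separating your proposal from a proof is the witness morphism itself, and that is where essentially all of the content lies --- as you note, the sparseness of irreducibly square-free words makes this a genuinely constrained search, and "I would search for such an $\alpha$" does not establish that one exists. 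The paper supplies the witness: the $17$-uniform palindromic morphism $\varphi$ with $\varphi(0)=01202120102120210$ and $\varphi(1),\varphi(2)$ obtained by applying the cyclic permutation $(0\ 1\ 2)$. That cyclic equivariance is a better symmetry choice than your proposed involution $0\leftrightarrow 2$: it collapses the six ordered pairs to two representatives, $\varphi(01)$ and $\varphi(02)$, and because each $\varphi(a)$ is itself irreducibly square-free the pair-check reduces further to the two boundary positions (the $17$th and $18$th letters) of each, where deletion produces the squares $11$, $02120212$, $10201020$ and $00$. So: right architecture, correct justification of why the finite checks suffice, but the theorem is only proved once a concrete $\alpha$ is exhibited and those checks are actually carried out.
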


\begin{proof}
Let $\uno$ be the following uniform palindromic morphism of length 17,  i.e., $\uno(1) = \pi(\uno(0))$ and $\uno(2)=\pi^2(\uno(0))$
for the permutation $\pi=(0\ 1\ 2)$ of the letters:
\begin{align*}
\uno(0) &=01202120102120210\\
\uno(1) &=12010201210201021\\
\uno(2) &=20121012021012102
\end{align*}
By Theorem~\ref{Crochemore}, $\uno$ preserves square-freeness. It is easy to
check that $\uno(0)$, and so also the isomorphic copies $\uno(1)$ and $\uno(2)$, are
irreducibly square-free. Finally, Procedure~I entails that deleting the `middle' 17th
letters $0$ of $\uno(01)$ and of $\uno(02)$ gives squares: $11$ and $02120212$,
respectively. Similarly, deleting the 18th letter of $\uno(01)$ and of $\uno(02)$ gives
squares: $10201020$ and $00$, respectively. These observations suffice for the proof of
the theorem, since now $\uno(w)$ is irreducibly square-free for \emph{all} square-free,
finite or infinite, words $w$.
\end{proof}

\begin{remark}
The morphism $\uno$ has an \emph{alignment property}, i.e., 
for all letters $a,b,c$ if $\uno(bc)=u\uno(a)v$ then
$u$ or $v$ is empty, and $a=b$ or $a=c$, respectively.
\end{remark}

The morphism $\uno$ has an infinite fixed point
\[
\Uuno=\uno^\omega(0)
\]
that is the limit of the sequence $\uno(0), \uno^2(0), \ldots$.

Note that the finite prefixes of $\Uuno$ are not always
irreducibly square-free. For instance, none of the prefixes of $\Uuno$ of length $n$ with $19\le n \le 29$
are irreducibly square-free.
However, we do have the following result with the help of $\uno$.

\begin{theorem}\label{thm:3}
There are irreducibly square-free words of all lengths $n$ except 4,5,7 and 12.
\end{theorem}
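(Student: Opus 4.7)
My plan is to settle the short-length cases directly from the enumeration in the table, and to cover the remaining lengths by combining a short base word with a $\uno$-image coming from Theorem~\ref{thm:2}.

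The table already exhibits irreducibly square-free words for every length in $\{3,6,8,9,10,11\}\cup\{13,14,\dots,30\}$. Crucially, the interval $[13,29]$ consists of exactly $17$ consecutive integers, so it contains one representative of each residue class modulo $|\uno(0)|=17$. I would fix, for each $r\in\{0,1,\dots,16\}$, an irreducibly square-free word $u_r$ of length $\ell_r\in[13,29]$ with $\ell_r\equiv r\pmod{17}$. For $n\ge 31$, write $n=17k+\ell_r$ with $r=n\bmod 17$ and $k\ge 1$, and try to construct the target word as
\[
w \;=\; \uno(v)\cdot u_r,
\]
where $v$ is a square-free ternary word of length $k$ to be tuned so that the concatenation introduces no square and no disposable letter at the boundary.

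The verification splits as follows. Square-freeness inside $\uno(v)$ comes from Crochemore's criterion (Theorem~\ref{Crochemore}) applied to $\uno$; inside $u_r$, it is by hypothesis on $u_r$; a square straddling the boundary is controlled by the alignment property of $\uno$ stated in the Remark after Theorem~\ref{thm:2}, which forces such a square to occupy a short, computable window. For irreducibility, an interior letter strictly inside $\uno(v)$ is non-disposable by Procedure~I applied to the letters of $v$ (the witness square lies entirely in $\uno(v)$, hence in $w$); an interior letter strictly inside $u_r$ is non-disposable by hypothesis (the witness square lies entirely in $u_r$, hence in $w$); and the constant number of letters lying within a bounded window of the junction are handled by direct case analysis.

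The main obstacle is exactly this boundary case analysis: one must tune the last letter of $v$, the first letter of $u_r$, and the isomorphic copy of $u_r$ (via permutations of $\{0,1,2\}$) so that removing any letter close to the interface still leaves a square somewhere in $w$. Because the alignment property of $\uno$ localizes the interaction to a constant number of positions, this reduces to a finite check parametrized by three letters on each side and three isomorphic copies, which I expect to be routine but combinatorially tedious. Combined with the table for $n\le 30$, this yields irreducibly square-free words of every length $n\notin\{4,5,7,12\}$.
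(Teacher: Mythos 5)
Your overall architecture---settle small lengths from the enumeration table, then cover each residue class modulo $17$ by gluing a short irreducibly square-free word to a $\uno$-image---is essentially the paper's, except mirrored: you append the short word $u_r$ on the right of $\uno(v)$, whereas the paper prepends a special word $w_i$ on the left of a prefix of $\Uuno$. Your arithmetic is fine ($[13,29]$ contains one representative of each residue class, so every $n\ge 30$ decomposes as $17k+\ell_r$ with $k\ge 1$), and your accounting of which interior letters are already handled is correct: letters strictly inside $u_r$ inherit their witness squares, letters strictly inside $\uno(v)$ are covered by Procedure~I, and only the two junction letters plus the square-freeness of the concatenation remain.

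That remainder, however, is the substantive content of the proof, and your treatment of it has a genuine gap. First, square-freeness of $\uno(v)u_r$ is not a consequence of the alignment property confined to a ``short, computable window'': a square straddling the junction can a priori have one full copy lying deep inside $\uno(v)$, with period comparable to $|\uno(v)|$, so ruling it out is a statement about infinitely many $v$, not a finite check. The paper handles exactly this with Claims~A and~B---a minimality argument showing that a straddling square would pull back to a square $\uno(xaxa)$ inside $\Uuno$---and that argument works only because the glued words are chosen with special structure (each $w_i$ is a proper suffix of some $\uno(a)$, or ends in $121$ or $0102$). Your $u_r$ are arbitrary irreducibly square-free words from the table, and nothing guarantees that any of them, in any isomorphic copy, concatenates square-freely with $\uno(v)$ for all admissible $v$; you would first need to prove an analogue of Claim~A tailored to the words you pick. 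Second, the non-disposability of the two junction letters is only asserted; the paper pins it down by explicitly exhibiting the sixteen words $w_i$ and verifying condition~(v) (that $w_i\uno(0)$ is irreducibly square-free) by computation. Note also the paper's remark that the special words themselves cannot all be irreducibly square-free (there are none of lengths $4,5,7,12$), which shows the design constraint is delicate and an existence argument by ``tuning'' cannot be taken for granted. To close the gap you would need (i) a lemma controlling squares across the junction for your chosen $u_r$, and (ii) an explicit choice of $u_r$, isomorphic copy, and junction context for each residue class, with the finite verifications actually carried out.
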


\begin{proof}
Table~\ref{tab:small} gives an example for the cases $n \le 17$.

\begin{table}[hbt]
\begin{center}
\begin{tabular}{clcl}
$3$&$010  $                  &                $13$&$0102012101202  $ \\
$6$& $010212  $              &              $14$&$01020120212010  $ \\
$8$& $01020121  $             &            $15$&$010201210120212  $\\
$9$& $010212010  $             &           $16$&$0102012021201020$\\
$10$&$0102012101  $           &              $17$ &$01202120102120210 $\\
$11$&$01020120212  $         &&
 \end{tabular}
\caption{Small irreducibly square-free words. There are no examples for the lengths
4,5,7 and 12.} \label{tab:small}
\end{center}
\end{table}

For $n \ge 18$, we rely on  the morphism $\uno$ in order to have solutions for the
lengths $n \equiv p \pmod{17}$ for $p=0,1, \ldots, 16$.

\medskip

\noindent \emph{Claim~A.} Let $w$ be a nonempty suffix of $\uno(1)$ or $\uno(2)$ of length $|w| < 17$.
Then the word $w\Uuno$ is square-free (but not necessarily irreducibly square-free).

\smallskip

The word $w$ is a suffix of exactly one of the words $\uno(a)$, $a\in \Sigma$.
Suppose there is a square in $w\Uuno$ and assume that $w$ is of minimal length with this
property.
Then $w\Uuno$ has a prefix $uu$ for $u=w\uno(x)z$ for some words $x$ and  $z$
with $|z|<17$ (when $|x|$ is chosen to be maximal).
Hence $uu= w\uno(x)zw\uno(x)z$, and so $zw=\uno(a)$ for $a \in \Sigma$.
Therefore $z$ is nonempty.
By the alignment property, $uu$ must be followed in $w\Uuno$ by the word $w$. This delivers a square in
$\Uuno$, namely $\uno(x)zw\uno(x)zw=\uno(xaxa)$; a contradiction since $\Uuno$ is
square-free. This proves Claim~A.

\smallskip

Clearly, there are irreducibly square-free words of lengths $n \equiv 0 \pmod{17}$,
since we can take a prefix of $\Uuno$ of length $n/17$ and apply $\uno$ to it. Next we
extend $\Uuno$ to the left by considering words of the form $uw$, where $w$ is a
prefix of $\Uuno$.

\medskip

\noindent \emph{Claim~B.} The words  $121\Uuno$ and $0102\Uuno$ are square-free.

\smallskip

First, the words $121\uno(0)$ and $0102\uno(0)$ are not factors of $\Uuno$, since
$\uno$ has the alignment property and the given words are not suffixes of any $\uno(a)$, $ a\in
\Sigma$. Therefore, if $121\Uuno$ contains a square, then the square must be a prefix
$21v$ of  $21\Uuno$ for some $v$ (and  $1\Uuno$ is square-free by Claim~A).

Assume that $21v=21u21u$ where $v=u21u$ is a prefix of $\Uuno$. 
Now, $|v| > |\varphi(a)| = 17$, since $21$ is not followed by
the first letter of $u$ in any $\varphi(a)$, i.e.,
$u21=\uno(z)\uno(1)$ for some $z$, since only $\uno(1)$ ends in $21$. 
We have then that $\uno(1)=y21$ and $u=\uno(z)y$.
This means that the square $21v = 21\uno(z1z)y$ is necessarily
continued by the rest of $\uno(1)$, i.e., by $21$, giving a prefix $v21=u21u21$ of
$\Uuno$; a contradiction, since $\Uuno$ is square-free.

In the case of $0102\Uuno$, Claim~A guarantees that $102\Uuno$ is square-free. For the
full prefix $0102$, the claim follows since the prefix $01020120$ of $0102\Uuno$ does
not occur in $\Uuno$. This proves Claim~B.

\smallskip

The \emph{special words} $w_i$ of Table~\ref{tab:17} are chosen such that
\begin{itemize}
\item[(iii)]
$|w_i| = i$,
\item[(iv)]
$w_i\Uuno$ is square-free. By Claim~A, this follows for $i=1,2,4,5$ and $10$. By
Claim~B, the claim follows for the other cases.

\item[(v)]
$w_i\uno(0)$ is irreducibly square-free (by a simple computer check) .

\end{itemize}
The words $w_i$, themselves, are not (and, indeed, cannot be) all irreducibly
square-free, but they are square-free.

\begin{table}[hbt]
\begin{center}
\begin{tabular}{lllc|lllc}
 $w_1=1$&&                                                  $w_9=121020121$\\
 $w_2=02$&&                                                $w_{10}=2021012102$\\
 $w_3=121$&&                                                            $w_{11}=10121020121$ \\
 $w_4=2102$&&                                                            $w_{12}=101202120121$\\
 $w_5= 12102$&&                                                         $w_{13}=0210121020121$\\
  $w_6=020121$&&                                                      $w_{14}=01021201020121$\\
  $w_7=2120102$&&                                                     $w_{15}=010201202120121$ \\
  $w_8=01020121$&&                                                   $w_{16}= 0201021201020121$\\
\end{tabular}
\end{center}
\caption{The special words with $w_i \equiv i \pmod{17}$. The words $w_i$ with $i > 1$ that
end in $121$ or $0102$ as called for by Claim~B.}\label{tab:17}
\end{table}

Finally, let $n=17k+i$. By Table~\ref{tab:small}, we can assume that $n \ge 18$. We
then choose $w_i$ from Table~\ref{tab:17}, and pick a prefix $\uno(v)$ of $\Uuno$ of length
$17k$. This creates an irreducibly square-free word $w_i\uno(v)$ of length $n$.
\end{proof}

\section{Problems on Longer Words to Dispose}

The property of being irreducibly square-free can be generalized to longer factors than just letters.
Let $w \in \Sigma^*$ be a square-free word with a factorization $w=w_1vw_2$ such that
both $w_1$ and $w_2$ are nonempty. We say that the (occurrence of the) factor $v$
is \emph{disposable} if also $w_1w_2$ is square-free.
If a finite or infinite square-free word $w$ does not have disposable factors of length $k$
then $w$ is called \emph{$k$-irreducibly square-free}.

\begin{example}
We show that $\tau^{2n}(0)$ is \emph{not}
2-irreducibly square-free, for all $n \ge 2$.
Indeed, 
\begin{align*}
\tau^2(0) &= 012021\\
\tau^2(1) &= 0121\\
\tau^2(2) &= 02	
\end{align*}
Now, for $n \ge 2$, the word $\tau^{2n}(0)$ has the suffix $121$ since $\tau^2(1)$ has this suffix and $\tau^2(0)$ ends with the letter $1$.
But a $2$-irreducibly square-free word cannot be of the form $w121$, since by removing
the pair $12$, we obtain a (square-free) prefix of $w1$ of $w121$.

However, the limit $\Thue=\tau^\omega(0)$ is 2-irreducibly
square-free. 
To see this, we consider the 5-th powers of the morphism $\tau$:
\begin{align*}
\tau^5(0) &= 012021012102012021020121012021012102012101202102\\
\tau^5(1) &= 01202101210201202102012101202102\\
\tau^5(2) &= 0120210121020121
\end{align*}
where the lengths of the images are $48, 32$ and $16$, respectively.
These images have a common prefix $p=012021$ (and even longer ones).
A computer check shows that the words $\tau^5(a)p$
are 2-irreducibly square-free for $a=1,2$. Moreover, deleting
an internal occurrence of a pair $cd$ from $\tau^5(0)p$ results in a
square-free word only for $cd=20$ and $cd=02$ that lie inside $p$.
This proves that the infinite word $\Thue$ is 2-irreducibly square-free. 
\end{example}

These considerations raise many problems.

\begin{problem}\label{ProbInf}
Given $k \ge 1$, does there exist an infinite ternary 
word that is $k$-irreducibly square-free?
\end{problem}

\begin{theorem}
Every infinite ternary square-free word $w$ does have an infinite number of integers~$k$ for which $w$ is not $k$-irreducibly square-free. 
\end{theorem}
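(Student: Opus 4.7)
The plan is to exhibit, for every target length $N$, an internal disposable factor of $w$ of length at least $N$; this will force the set of $k$ for which $w$ is not $k$-irreducibly square-free to be unbounded, and hence infinite.

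The construction uses a repeated factor. Since $w$ is infinite and $|\Sigma|=3$, for each $n$ the pigeonhole principle yields a factor $u\in\Sigma^n$ occurring at infinitely many positions $p_1<p_2<\cdots$ in $w$. Square-freeness forces $p_{k+1}-p_k\ge n$ for every $k$: a shift $d<n$ of $u$ over itself would produce a period-$d$ window of length $n+d\ge 2d$, hence a square in $w$. Moreover the gaps cannot all equal $n$, since then $uu$ would be a factor of $w$. So some $j$ satisfies $p_{j+1}-p_j>n$; I put $p=p_j$, $q=p_{j+1}$, $v=w[p..q-1]$, and take $n\ge N$ and $j\ge 2$ so that $|v|=q-p>n\ge N$ and $p\ge 2$ (making $v$ internal).

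After removal, the residual word $w'=w[1..p-1]\,w[q..]$ agrees with $w$ on positions $1,\ldots,p+n-1$, because the copy of $u$ at $p$ that was excised is immediately restored by the identical copy of $u$ at $q$. Thus $w$ and $w'$ first differ at position $p+n$, and any new square in $w'$ must straddle this effective boundary. I would then analyze a boundary-crossing square $XX$ of length $2\ell$ by the location of the split. For $\ell\le n$, the matching condition between the length-$\ell$ suffix of $w[1..p+n-1]$ and the length-$\ell$ prefix of the shifted tail $w[q+n..]$ translates, via $w[p..p+n-1]=w[q..q+n-1]=u$, into a period-$\ell$ window of length $2\ell$ inside $w$, i.e.\ a genuine square in $w$, contradicting square-freeness. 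For $\ell>n$, the simple (midpoint) split forces $u$ itself to reoccur at $w$-position $q+\ell$; since $q=p_{j+1}$, this lies strictly between $p_{j+1}$ and the next occurrence $p_{j+2}$, and is excluded once $p_{j+2}-p_{j+1}$ is large enough, which a further pigeonhole step supplies.

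The main obstacle is the asymmetric (non-simple) boundary splits in the regime $\ell>n$, where both halves of $X$ straddle position $p+n$ and the matching condition mixes positions both before the first $u$ and beyond the second. Here the plan is to combine the minimal-gap bound $p_{k+1}-p_k\ge n$ with the $u$-equality in an iterative fashion: each such split either demands an occurrence of $u$ at a position prohibited by the gap bound or, when pushed through the $u$-equality, collapses to a shorter square inside $w$. Inflating $n$ well beyond $N$ provides the slack needed for this bookkeeping, and the theorem then follows.
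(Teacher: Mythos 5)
Your construction has a genuine gap, and in fact its central claim is false. You propose to delete the block $v=w[p..q-1]$ between two consecutive occurrences of a factor $u$ (with $p\ge 2$) and assert that the residual $w'=w[1..p-1]\,w[q..]$ is square-free, deferring the hard case ($\ell>n$, asymmetric splits) to unspecified ``bookkeeping.'' That case cannot be closed, because the claim fails there: in the Thue word $\Thue=012021012102\cdots$ displayed in the paper, take $u=0$ (so $n=1$), with consecutive occurrences at $p=4$ and $q=7$ (gap $3>n$, and $p\ge2$). Deleting $w[4..6]=021$ leaves a residual beginning $012\cdot 0121\cdots=012012\cdots$, which contains the square $(012)(012)$. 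This is exactly an $\ell>n$ boundary-crossing square, formed by the context \emph{before} position $p$ combining with the continuation \emph{after} the restored copy of $u$ --- a failure mode that is structural and is not removed by inflating $n$; you give no argument that a larger $n$ prevents the prefix $w[1..p-1]$ from matching material beyond $q+n$, and nothing in square-freeness of $w$ forbids it. So the proposal does not establish that any disposable factor exists, let alone arbitrarily long ones.

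The paper's proof avoids this entirely by anchoring the deletion at the \emph{first} position. Write $w=aua w_0$, where the displayed $a$'s are the first and the $m$-th occurrence of the initial letter $a$ (any $m\ge 2$). Deleting the internal factor $v=ua$ leaves $w_1w_2=aw_0$, which is \emph{literally the suffix of $w$} beginning at that $m$-th occurrence of $a$, hence square-free with no case analysis; and $w_1=a$, $w_2=w_0$ are nonempty, so $v$ is internal. Letting $m\to\infty$ gives disposable internal factors of unboundedly many lengths $|u|+1$. The lesson is that the residual must be arranged to be a genuine factor (here, a suffix) of $w$; your choice $p\ge 2$ destroys exactly that property.
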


\begin{proof}
We need only to consider repetitions of the first letter of $w$, say $w=auaw_0$.
Deleting the factor $ua$ gives $aw_0$, a (square-free) suffix of~$w$.
\end{proof}

We have seen that
Problem~\ref{ProbInf} has a positive solution for $k=1$ and $k=2$. For small values of $k$ 
a solution may be found using square-free
morphisms. E.g., the morphism
\begin{align*}
\alpha_3(0) &=0121012\\ 
\alpha_3(1) &=01020120212\\ 
\alpha_3(2) &=0102101210212 
\end{align*}
generates a $3$-irreducibly square-free word $\alpha^\omega(0)$.
This follows from the fact that $\alpha_3(ab)$ is 3-irreducibly
square-free for all different letters $a$ and $b$.

\begin{problem}\label{ProbFin}
Does there exist, for every $k$,  a bound $N(k)$ such that there exist
$k$-irreducibly square-free words of all lengths $n \ge N(k)$?
\end{problem}

Finally, we state a problem of the opposite nature:

\begin{problem}\label{ProbFin2}
Does there exist an infinite square-free word $w$ such that 
 $w$ is $k$-irreducibly square-free for no $k\ge 1$?
\end{problem}

\medskip

\noindent\textbf{Acknowledgements.} I would like to thank the referees of this journal
for their comments that improved the presentation of this paper.

\bibliography{bib-small}

\end{document}